\documentclass[a4,12pt]{amsart}
\usepackage{amssymb,amstext,amsmath,amscd,amsthm,amsfonts,enumerate,latexsym, comment}
\usepackage{color}
\usepackage[dvipdfmx]{graphicx}

\usepackage[all]{xy}
\usepackage{hyperref} 
\theoremstyle{plain}
\newtheorem{thm}{Theorem}[section]

\newtheorem*{thm*}{Theorem}
\newtheorem*{cor*}{Corollary}

\newtheorem{prop}[thm]{Proposition}

\newtheorem{lem}[thm]{Lemma}
\newtheorem{cor}[thm]{Corollary}

\newtheorem*{claim*}{Claim}

\theoremstyle{definition}

\newtheorem{ex}[thm]{Example}

\newtheorem{rem}[thm]{Remark}

\theoremstyle{remark}

\numberwithin{equation}{thm}

\def\Min{\operatorname{Min}}

\def\Spec{\operatorname{Spec}}

\def\m{\mathfrak m}

\newcommand{\rmQ}{\mathrm{Q}}

\newcommand{\jcb}{\operatorname{J}}

\newcommand{\mapright}[1]{%
\smash{\mathop{%
\hbox to 1cm{\rightarrowfill}}\limits^{#1}}}

\newcommand{\mapleft}[1]{%
\smash{\mathop{%
\hbox to 1cm{\leftarrowfill}}\limits_{#1}}}

\def\depth{\operatorname{depth}}

\def\Ass{\operatorname{Ass}}

\def\Spec{\operatorname{Spec}}
\title[When is the strict closure of rings finitely generated?]{When is the strict closure of rings finitely generated?}

\author[R. Isobe]{Ryotaro Isobe}

\address{Faculty of Education, Chiba University, 1-33, Yayoi-cho, Inage-ku, Chiba-shi, Chiba-ken, 263-8522, Japan}
\email{ryotaro.isobe@faculty.gs.chiba-u.jp}

\thanks{2020 {\em Mathematics Subject Classification.} 13A15, 13B22.}
\thanks{{\em Key words and phrases.} Strict closure, Arf ring, excellent ring.}

\thanks{The author was supported by JSPS KAKENHI Grant Number JP24K16910.}

\begin{document}

\maketitle

\setlength{\baselineskip} {15.2pt}

\begin{abstract}
This paper investigates the finite generation of the strict closure of rings in arbitrary dimension. For a Noetherian local ring $(R, \m)$, we provide a sufficient condition under which the strict closure $R^*$ is finitely generated as an $R$-module. Using this result, we characterize the finite generation of the strict closure over excellent rings.  

\end{abstract}

%{\footnotesize \tableofcontents}

%%%%%%%%%%%%%%%%%%%%%%%%%%%%%%%%%%%%%%%%%%%%%%%%%%%%%%%%%%%%%%%%%%%%%%%%%%%%%%%%%%%%%%%%%%%%%%%%%%%%%%%%%%%%%%%%%%%%%%%%%%%%%%%%%%%%%%%%%%%%%%%%

\section{Introduction}\label{intro}
Let $S/R$ be an extension of commutative rings, and we set
$$
R_S^{*}=\left\{ \alpha \in S \mid \alpha \otimes 1 = 1 \otimes \alpha \text{ in } S \otimes_RS\right\}.
$$
Then, $R^*_S$ is an intermediate ring between $R$ and $S$, which is called the {\it strict closure of $R$ in $S$}. In particular, we denote it simply by $R^*$ if $S=\overline{R}$, where $\overline{R}$ denotes the integral closure of $R$ in the total ring $\rmQ(R)$ of fractions. 

The notion of the strict closure of rings was introduced by Lipman \cite{L} in 1971.
 In the same paper, it was conjectured that $R=R^*$ if and only if $R$ is an {\it Arf ring} when $R$ is a Cohen-Macaulay semi-local ring whose localization at maximal ideals are dimension one. This conjecture was partially proved in \cite{L} for the case when $R$ contains a field ({\cite[Proposition 4.5, Theorem 4.6]{L}}), and in 2023, it was settled in \cite{C} for the general case ({\cite[Theorem 4.4]{C}}). 
 Furthermore, in recent years, there has been active research on one-dimensional strictly closed rings (i.e., Arf rings), and the structures of their integrally closed ideals and reflexive modules have been clarified (see, for example, \cite{D, I1, IK}).
 In addition, \cite{I2} investigates the structure of $R^*$ when $\dim R=1$, characterizing the finite generation of $R^*$.

%As $[R^*]^*=R^*$ and $R^*\subseteq T^*$ if $T$ is an intermediate ring between $R$ and $\overline{R}$, $[-]^*$ is actually a closure operation. By contrast, since $R^*_S$ is the kernel  of the $R$-linear map $\sigma$, the notion of strict closure is compatible with flat base changes. Readers may consult \cite{L, C, EG, EGI} for further reading about the properties of strict closures.   

%which is the kernel of the  $R$-linear map, 
%$$
%\sigma: S \to S \otimes_RS, \ \  \alpha \mapsto \alpha \otimes 1 - 1 \otimes \alpha.
%$$

The aim of this article is to characterize the finite generation of $R^*$ in arbitrary dimension. For a Noetherian local ring $(R, \m)$ with $\depth R>0$, it is well-known that $R$ is reduced if the integral closure $\overline{R}$ is a finitely generated $R$-module, and $\overline{R}$ is a finitely generated $R$-module if $\widehat{R}$ is reduced, where $\widehat{R}$ denotes the $\m$-adic completion of $R$ (see, for example, {\cite[Theorem 9.2.2]{SH}}). 
In particular, $\overline{R}$ is a finitely generated $R$-module if and only if $R$ is reduced, when $R$ is an {\it excellent ring}. 

%In contrast, the author proved that the equality $(\sqrt{(0)})^2=(0)$ holds in $R$ if $R^*$ is a finitely generated $R$-module ({\cite[Proposition 4.1]{I2}}), and furthermore, the above equality in $\widehat{R}$ characterizes the finite generatedness of $R^*$ when $R$ is a one-dimensional Cohen-Macaulay semi-local ring with $\dim R_M=1$ for every maximal ideal $M$ ({\cite[Theorem 4.2]{I2}}). 

To state our main result, recall that a ring $R$ {\it satisfies Serre's condition $(S_1)$} if every associated prime ideal of $R$ is minimal in $\Spec R$.
The main result of this article is the following, which is the strict closure analogue of these above results on the integral closure, and a kind of generalization of {\cite[Theorem 4.2]{I2}} to arbitrary dimension. 

\begin{thm}[$=$ Theorem \ref{mthm} and Corollary \ref{2.5}]\label{thm1}
Suppose that $(R, \m)$ is a Noetherian local ring with $\depth R>0$. Then, the following hold true.

\begin{enumerate}[$(1)$]
\item
 If $\widehat{R}$ satisfies Serre's condition $(S_1)$ and $(\sqrt{(0)})^2=(0)$ in $\widehat{R}$, then $R^*$ is a finitely generated $R$-module. 
\item
Suppose that $R$ is excellent and satisfies Serre's condition $(S_1)$. Then, $R^*$ is a finitely generated $R$-module if and only if $(\sqrt{(0)})^2=(0)$ in $R$.

\end{enumerate}

\end{thm}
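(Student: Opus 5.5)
Write $N=\sqrt{(0)}$ in $R$ and $\widehat N=\sqrt{(0)}$ in $\widehat R$. The plan is to bound $R^*$ inside a finitely generated $R$-submodule of $\rmQ(R)$, using faithful flatness of $R\to\widehat R$ and the compatibility of strict closure with flat base change. Since $R^*_S$ is the kernel of the $R$-linear map $\sigma\colon S\to S\otimes_RS$, $\alpha\mapsto\alpha\otimes1-1\otimes\alpha$, it commutes with flat base change. The difficulty is that $\overline{R}$ need not be finite, nor behave well under completion, when $R$ is not reduced. So the first step is a structural description of $R^*$ under the hypothesis $N^2=(0)$, or rather $\widehat N^{2}=(0)$.

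\emph{Key step (1a).} Put $A=R/N$ and $\overline{A}$ its integral closure. Any $\alpha\in\overline R$ satisfies $\alpha\otimes1=1\otimes\alpha$ in $\overline R\otimes_R\overline R$. The plan is to show this forces the image of $\alpha$ in $\overline R/N\overline R$ (or a suitable quotient) to lie in the strict closure of $A$. For the nilpotent part, I would show that $N\overline R\cap R^*\subseteq (N:_{\rmQ(R)}N)$-type terms are controlled: if $n\in N$ and $\alpha=n/s$ is strictly closed, then $\alpha N=0$ (using $N^2=0$), so $\alpha\in \operatorname{Hom}_R(\overline{R}\text{-ish},N)$. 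Concretely I would aim for the inclusion
\[
R^*\subseteq \{\alpha\in \rmQ(R)\mid \alpha N\subseteq N,\ \bar\alpha\in \overline A\},
\]
with the $N$-component killed by $N$. Then $R^*$ is contained in an extension of something inside $\overline A$ by something inside $\operatorname{Hom}_R(N,N)$-like modules, and these are finite when $\overline A$ is finite over $A$ and the $(S_1)$ condition gives $\rmQ(R)$-control. The $(S_1)$ hypothesis makes nonzerodivisors of $R$ avoid all associated primes, which are the minimal ones. Hence $\rmQ(R)=\prod R_{\mathfrak p}$ over minimal primes, and $\operatorname{Hom}$'s computed in $\rmQ(R)$ are finite via $\depth>0$.

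\emph{Passing to the completion (1).} Since $R\to\widehat R$ is faithfully flat, we have $R^*\otimes_R\widehat R\subseteq (\widehat R)^*$ inside $\rmQ(\widehat R)$. Also $R^*$ is finite over $R$ iff $R^*\widehat R$ is finite over $\widehat R$, by faithfully flat descent of finite generation for submodules of $\rmQ(R)$ containing $R$ with bounded conductor. So it suffices to treat $\widehat R$, which is complete, satisfies $(S_1)$, and has $\widehat N^2=0$. Then $\widehat R/\widehat N$ is a reduced complete local ring, so its normalization is finite (Nagata). The $(S_1)$ and depth conditions then make $\operatorname{Hom}(\widehat N,\widehat N)$-type terms finite, and the key step gives finiteness of $(\widehat R)^*$, hence of $R^*$.

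\emph{Part (2).} For sufficiency, if $R$ is excellent then $\widehat R$ inherits $(S_1)$, since excellence makes the formal fibers Cohen–Macaulay. Moreover $\sqrt{(0)}\widehat R=\sqrt{(0)_{\widehat R}}$, because the formal fibers are reduced, so $\widehat N^2=N^2\widehat R=0$, and (1) applies. For necessity I would reprove (or quote the 1-dimensional argument of \cite[Proposition 4.1]{I2}): if $N^2\neq0$, take $x\in N$ with $xN\neq0$. I would exhibit an infinite strictly ascending chain of strictly closed elements, such as $x/s^k$ for a nonzerodivisor $s\in\m$, with $x/s^k\in R^*$. The identity $x/s^k\otimes1-1\otimes x/s^k=0$ would be checked using that $x$ squares to lower nilpotency. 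The main obstacle I expect is the key step: both the precise inclusion and verifying which elements $n/s$ lie in $R^*$, where the $\otimes$ computation must use $N^2=0$ carefully.
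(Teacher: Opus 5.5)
Your reduction to the complete case (via $R^*\otimes_R\widehat R\subseteq[\widehat R]^*$ and faithful flatness) and your sufficiency argument in (2) match the paper. The heart of (1), finiteness of $R^*$ once $R=\widehat R$, has a genuine gap.

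\emph{The key step fails.} The inclusion you aim for, $R^*\subseteq\{\alpha\in\rmQ(R)\mid \alpha N\subseteq N,\ \bar\alpha\in\overline A\}$, is false under the hypotheses. Let $A=k[[t^3,t^4]]$ and $R=A[\epsilon]/(\epsilon^2)$, so that $A=R/N$. This $R$ is complete, satisfies $(S_1)$, has depth one, and has $N=A\epsilon$ with $N^2=(0)$. In $\overline R\otimes_R\overline R$ one computes $t^5\otimes1=t\otimes t^4=t^4\otimes t=1\otimes t^5$, so $t^5\in R^*$, while $t^5\epsilon\notin N$.

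Even a correct inclusion of this shape would bound nothing. When $N^2=(0)$, all of $N\rmQ(R)$ satisfies both conditions. But $N\rmQ(R)$ is not finitely generated when $N\neq(0)$: for a non-zerodivisor $s\in\m$ one has $sN\rmQ(R)=N\rmQ(R)$, so Nakayama would force $N\rmQ(R)=0$. For the same reason $\operatorname{Hom}_R(N,N)$ cannot detect $R^*\cap N\rmQ(R)$, since every element of $N\rmQ(R)$ acts as zero on $N$. So your plan has no mechanism showing that the nilpotent directions $N\rmQ(R)\subseteq\overline R$ contribute only finitely much to $R^*$, and that is the entire content of (1).

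\emph{The missing ingredient} is the criterion of Proposition~\ref{2.3}: if $T=S[V]$ with $fg\in S$ for all $f,g\in V$, then $S^*_T=S$. The paper proceeds as follows.
\begin{enumerate}[$(i)$]
\item Since $B=R/N$ is complete and reduced, $\overline B$ is finite over $B$; lift its generators to $x_1,\dots,x_\ell\in\overline R$.
\item Put $S=R[x_1,\dots,x_\ell]$, which is module-finite over $R$.
\item By Lemma~\ref{2.1}, $\overline R=S+V=S[V]$ with $V=N\rmQ(R)$ and $V^2=(0)$.
\item Hence $S$ is strictly closed, and $R^*\subseteq S^*=S$.
\end{enumerate}
No exact description of $R^*$ is needed, only a finite upper bound.

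\emph{Necessity in (2).} Your sketched reproof uses the wrong elements: for $x\in N$ with $xN\neq(0)$, $x/s^k$ need not be strictly closed. In $R=k[[u,y]]/(y^3)$, take $R'=R+y^2\rmQ(R)$ and $V=y\rmQ(R)$. Then $R'[V]=\overline R$ and $V^2\subseteq R'$, so Proposition~\ref{2.3} gives $R^*\subseteq R'$, hence $y/u\notin R^*$. The elements that work are those of $N^2\rmQ(R)$. For $a,b\in N$ and a non-zerodivisor $s$, one has $\frac{ab}{s}\otimes1=\frac{a}{s}\otimes b=1\otimes\frac{ab}{s}$, so $N^2\rmQ(R)\subseteq R^*$. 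The Nakayama argument above then shows $N^2\rmQ(R)$ is not finitely generated unless $N^2=(0)$. Simply quoting Proposition~\ref{2.5}, as the paper does, is also fine.
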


%\begin{center}
%\begin{enumerate}[$(i)$]
%\item
%$R$ is reduced if the integral closure $\overline{R}$ is a finite generated $R$-module, and 
%\item
%$\overline{R}$ is a finitely generated $R$-module if $\widehat{R}$ is reduced, where $\widehat{R}$ denotes the $\m$-adic completion of $R$. 

%\end{enumerate}
%\end{center}

%%%%%%%%%%%%%%%%%%%%%%%%%%%%%%%%%%%%%%%%%%%%%%%%%%%%%%%%%%%%%%%%%%%%%%%%%%%%%%%%%%%%%%%%%%%%%%%%%%%%%%%%%%%%%%%%%%%%%%%%%%%%%%%%%%%%%%%%%%%%%%%%
\section{Proof of main results}\label{sec2}

In what follows, let $(R, \m)$ be a Noetherian local ring with $\depth R>0$. We denote by $\rmQ(R)$ the total ring of fractions of $R$. We  say that $R$ {\it satisfies Serre's condition $(S_1)$} if $\Ass R=\Min R$.  %We begin with the following lemma. 

Suppose that $R$ satisfies Serre's condition $(S_1)$, and set $B=R/\sqrt{(0)}$.
Since both $R$ and $B$ satisfy $(S_1)$, we have
\begin{center}
 $\displaystyle \rmQ(R)=\prod_{P\in \Min R} R_P$ \quad and \quad $\displaystyle \rmQ(B)=\prod_{P\in \Min R} R_P/PR_P$. 
 \end{center}
 Therefore, passing to the natural surjective map $\rmQ(R)\to \rmQ(B)$, we can consider that $\rmQ(B)=\rmQ(R)/\sqrt{(0)}\rmQ(R)$.
 Let $\overline{R}$ denote the integral closure of $R$ in $\rmQ(R)$. We then have the following.
 
 \begin{lem}\label{2.1}
$\overline{B}=\overline{R}/\sqrt{(0)}\rmQ(R)$.
\end{lem}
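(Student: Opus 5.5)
The plan is to prove both inclusions directly inside $\rmQ(B)=\rmQ(R)/N$, where $N=\sqrt{(0)}\rmQ(R)$ and $\pi:\rmQ(R)\to\rmQ(B)$ is the natural surjection fixed above. First I will record two preliminary facts. The kernel of $\pi$ is exactly $N$: componentwise, $\pi$ is the product of the maps $R_P\to R_P/PR_P$. For $P\in\Min R$, $PR_P$ is the nilradical of $R_P$, and this nilradical equals $\sqrt{(0)}R_P$ because localization commutes with taking nilradicals. Moreover $N\subseteq\overline{R}$, since every element of $N$ is nilpotent and hence a root of some monic polynomial $t^k$. So the quotient $\overline{R}/N$ makes sense and equals $\pi(\overline{R})\subseteq\rmQ(B)$. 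The claim therefore amounts to $\pi(\overline{R})=\overline{B}$, where $\overline{B}$ is the integral closure of $B$ in $\rmQ(B)$.

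For $\pi(\overline{R})\subseteq\overline{B}$, take $x\in\overline{R}$ with $x^n+a_1x^{n-1}+\cdots+a_n=0$ and $a_i\in R$. Applying the ring homomorphism $\pi$, which restricts to the canonical surjection $R\to B$, gives a monic equation for $\pi(x)$ with coefficients $\overline{a_i}\in B$. Hence $\pi(x)$ is integral over $B$ and lies in $\rmQ(B)$.

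For the reverse inclusion, take $y\in\overline{B}$ and choose $x\in\rmQ(R)$ with $\pi(x)=y$, using the surjectivity of $\pi$. Write a monic equation $f(y)=0$ with $f\in B[t]$, and lift its coefficients to $R$ to obtain a monic $F\in R[t]$ with $\pi(F(x))=f(y)=0$. Then $F(x)\in\ker\pi=N$, which consists of nilpotent elements of $\rmQ(R)$ since $\sqrt{(0)}$ is a finitely generated nil ideal. So $F(x)^m=0$ for some $m$. Since $F^m$ is monic in $R[t]$, $x$ is integral over $R$, i.e.\ $x\in\overline{R}$, and therefore $y=\pi(x)\in\pi(\overline{R})$.

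The only step needing care is the identification $\ker\pi=\sqrt{(0)}\rmQ(R)$, which depends on the $(S_1)$ description $\rmQ(R)=\prod_{P\in\Min R}R_P$ and on $P$ being minimal. The integrality arguments themselves are routine.
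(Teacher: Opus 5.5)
Your proposal is correct and follows essentially the same route as the paper. The forward inclusion comes from pushing the integral equation through $\pi$. The reverse inclusion comes from lifting $\varphi$ to $\alpha\in\rmQ(R)$ and the coefficients to $R$, noting that $F(\alpha)$ lies in the nil ideal $\sqrt{(0)}\rmQ(R)$, and raising to an $m$-th power; your checks that $\ker\pi=\sqrt{(0)}\rmQ(R)$ and $\sqrt{(0)}\rmQ(R)\subseteq\overline{R}$ fill in details the paper treats as clear.
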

\begin{proof}
The inclusion $\overline{R}/\sqrt{(0)}\rmQ(R) \subseteq \overline{B}$ is clear. 

Let $\varphi \in \overline{B}$. Then, 
$$\varphi^n+\xi_1\varphi^{n-1}+\cdots + \xi_n=0$$
in $\rmQ(B)$ for some $n\ge 1$ and $\xi_i\in B$. We write $\varphi=\overline{\alpha}$ and $\xi_i=\overline{a_i}$ with $\alpha \in \rmQ(R)$ and $a_i\in R$, where $\overline{*}$ denotes the image of $*$ in $\rmQ(B)$. 
 Since $\alpha^n+a_1\alpha + \cdots + a_n\in \sqrt{(0)}\rmQ(R)$, there exists $m\ge 1$ such that 
 $$(\alpha^n+a_1\alpha + \cdots + a_n)^m=0$$ 
 in $\rmQ(R)$, which implies that $\alpha \in \overline{R}$.
\end{proof}

Let $\widehat{R}$ denote the $\m$-adic completion of $R$.
We obtain the following.

\begin{prop}\label{2.2}
Suppose that $R=\widehat{R}$ and $R$ satisfies Serre's condition $(S_1)$. Then, there exists an intermediate ring $S$ between $R$ and $\rmQ(R)$ such that $S$ is a finitely generated $R$-module and $\overline{R}=S+\sqrt{(0)}\rmQ(R)$.  

\end{prop}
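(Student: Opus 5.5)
Since $R=\widehat{R}$ is complete, $B=R/\sqrt{(0)}$ is a complete, hence excellent (in particular Nagata), reduced Noetherian local ring. Therefore its integral closure $\overline{B}$ in $\rmQ(B)$ is a finitely generated $B$-module, by the standard result quoted in the introduction (e.g.\ {\cite[Theorem 9.2.2]{SH}}: $\overline{B}$ is module-finite when $\widehat{B}=B$ is reduced). Note that $\depth B>0$ holds because $B$ is reduced and $\m\notin\Min R$; the latter follows from $\depth R>0$.

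Write $\overline{B}=B\varphi_1+\cdots+B\varphi_\ell$ with $\varphi_i\in\overline{B}$. By Lemma \ref{2.1} we have $\overline{B}=\overline{R}/\sqrt{(0)}\rmQ(R)$, so we may choose lifts $\alpha_i\in\overline{R}\subseteq\rmQ(R)$ of the $\varphi_i$. Each $\alpha_i$ is integral over $R$. Set
$$S=R[\alpha_1,\ldots,\alpha_\ell]\subseteq\overline{R}.$$
This is an intermediate ring between $R$ and $\rmQ(R)$. Because it is generated as an $R$-algebra by finitely many integral elements, it is a finitely generated $R$-module.

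It remains to show $\overline{R}=S+\sqrt{(0)}\rmQ(R)$. First, $\sqrt{(0)}\rmQ(R)$ consists of nilpotent elements of $\rmQ(R)$, which are integral over $R$. Hence $S+\sqrt{(0)}\rmQ(R)\subseteq\overline{R}$. Conversely, let $\beta\in\overline{R}$. Its image $\overline{\beta}\in\overline{B}$ can be written as $\sum \overline{b_i}\varphi_i$ with $b_i\in R$. Then $\beta-\sum b_i\alpha_i$ maps to $0$ in $\rmQ(B)=\rmQ(R)/\sqrt{(0)}\rmQ(R)$, so it lies in $\sqrt{(0)}\rmQ(R)$. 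This gives $\beta\in S+\sqrt{(0)}\rmQ(R)$.

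The only nontrivial input is the module-finiteness of $\overline{B}$, which comes from completeness together with reducedness. Some care is needed with the identification $\rmQ(B)=\rmQ(R)/\sqrt{(0)}\rmQ(R)$ that the paper sets up using $(S_1)$. This identification is what makes the kernel of $\rmQ(R)\to\rmQ(B)$ equal to exactly $\sqrt{(0)}\rmQ(R)$, and so it is the point to verify carefully.
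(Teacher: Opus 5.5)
Your proof is correct and follows essentially the same route as the paper: module-finiteness of $\overline{B}$ for the complete reduced ring $B$, lifting its generators to $\overline{R}$ via Lemma~\ref{2.1}, and taking $S=R[x_1,\ldots,x_\ell]$. You spell out the two inclusions in $\overline{R}=S+\sqrt{(0)}\rmQ(R)$ more explicitly than the paper, which simply compares images in $\rmQ(R)/\sqrt{(0)}\rmQ(R)$. Your remark that $\depth B>0$ is not needed for the argument.
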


\begin{proof}
Since $B$ is a complete reduced ring, we have that $\overline{B}$ is a finitely generated $B$-module (see, for example, {\cite[Theorem 9.2.2]{SH}}). Thus, there exists $\ell \ge 1$ and $x_1, \ldots , x_{\ell} \in \overline{R}$ such that $\overline{B}=B+\sum_{i=1}^{\ell} B\overline{x_i}$ by Lemma \ref{2.1}. 
Then, by setting $S=R[x_1, \cdots, x_{\ell} ]$, $S$ is a finitely generated $R$-module and 
\begin{center}
$S+\sqrt{(0)}\rmQ(R)/\sqrt{(0)}\rmQ(R)=\overline{B}=\overline{R}/\sqrt{(0)}\rmQ(R)$, 
\end{center}
which implies that $\overline{R}=S+\sqrt{(0)}\rmQ(R)$, as desired. 
\end{proof}

Here, we introduce the following proposition.

\begin{prop}[{\cite[Theorem 2.1]{EG}}, {\cite[Corollary 2.7]{EGI}}]\label{2.3}
Let $S$ be a commutative ring and $T$ be an intermediate ring  between $S$ and $\rmQ(S)$. Let $V$ be a non-empty subset of $T$ such that $T=S[V]$. If $fg\in S$ for all $f, g\in V$, then $S=S^{*}_T$.   
\end{prop}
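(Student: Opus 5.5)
The plan is to turn the condition $\alpha\otimes 1=1\otimes\alpha$ into the vanishing of an explicit element of a tensor product, and then detect that element with a suitable $S$-bilinear pairing. The inclusion $S\subseteq S^*_T$ is trivial, so the whole task is the reverse inclusion. I have not carried out the key construction; the central step below is a proposed route, not a finished argument.

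\emph{Step 1: normal form.} Since $fg\in S$ for all $f,g\in V$, every monomial of degree $\ge 2$ in elements of $V$ lies in $S$ or in $S\cdot V$. For example, $fgh=(fg)h\in Sh$. Hence
$$T=S+\sum_{f\in V}Sf .$$
So any $\alpha\in S^*_T$ can be written as $\alpha=s+\sum_{i=1}^{n}s_if_i$ with $s,s_i\in S$ and $f_i\in V$. Since $s\in S$ already satisfies the defining condition, we may assume $s=0$. The hypothesis then reads
$$\sum_{i} s_i\,(f_i\otimes 1-1\otimes f_i)=0 \quad\text{in } T\otimes_S T .$$
We must show $\alpha\in S$.

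\emph{Step 2: the test map.} The map $T\otimes_S T\to T\otimes_S (T/S)$ sends $\alpha\otimes 1$ to $0$ and $1\otimes\alpha$ to $1\otimes\overline{\alpha}$. It therefore suffices to show that $1\otimes\overline{\alpha}\neq 0$ in $T\otimes_S(T/S)$ whenever $\alpha\notin S$. To do this I would construct, for each $g\in V$, an $S$-bilinear map
$$\Phi_g\colon T\times (T/S)\to \rmQ(S)/S,\qquad (x,\overline{y})\mapsto \overline{g\,x\,y}.$$
The point to check is that $\Phi_g$ is well defined, i.e.\ independent of the representative $y$. This is where $V\cdot V\subseteq S$ enters: on the summand $\sum_f Sf$ one has $g\cdot\bigl(\sum Sf\bigr)\subseteq S$. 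Using this, I would try to arrange the target (for instance replacing $\rmQ(S)/S$ by a suitable quotient such as $\rmQ(S)/(S+Sg)$) so that $gxS$ dies for every $x\in T$. I would then test $\Phi_g$ against $1\otimes\overline{\alpha}$ for well-chosen $g$ to conclude that $\alpha$ cannot lie outside $S$.

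\emph{Step 3: direct manipulation as a fallback.} In parallel, I would multiply the relation of Step 1 by the elements $1\otimes g$ and $g\otimes 1$ with $g\in V$. Using $gf_i\in S$, each $gf_i$ can be moved across the tensor sign. This yields identities such as
$$g\alpha\otimes 1 = g\otimes\alpha \quad\text{and}\quad \alpha\otimes g=1\otimes \alpha g,$$
where $\alpha g\in S$. Combining them should give $\alpha\otimes g = (\alpha g)\otimes 1$. Applying an $S$-linear functional of the kind built in Step 2 would then force $\alpha\in S$, perhaps after first localizing to reduce to a local situation.

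The main obstacle is Step 2: producing a well-defined $S$-bilinear detecting map. The twisting by $g$ and the use of $V\cdot V\subseteq S$ must be balanced so that the map is defined on $T\otimes_S T$ and still separates $\alpha\otimes 1$ from $1\otimes\alpha$. The first thing I would try is the pairing $(x,y)\mapsto \overline{gxy}$ into a quotient of $\rmQ(S)$ by $S+Sg$, checking well-definedness on the generators $S$ and $Sf$ separately.
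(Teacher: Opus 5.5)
\textbf{There is a genuine gap: the detecting pairing is never constructed, and the candidate you propose cannot work.} Your Step~1 is correct and is the right start. The reformulation in Step~2 is harmless: the kernel of $T\otimes_S T\to T\otimes_S(T/S)$ is $T\otimes 1$, so the new condition is equivalent to the original one.

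\emph{Why $\Phi_g$ fails.} The pairing $(x,\overline{y})\mapsto\overline{gxy}$ depends only on the product $xy$. Composed with $T\otimes_S T\to T\otimes_S(T/S)$, it therefore factors through the multiplication map $T\otimes_S T\to T$. That map kills $\alpha\otimes 1-1\otimes\alpha$ for \emph{every} $\alpha\in T$, so $\Phi_g$ can never detect $\alpha\notin S$. Concretely:
\begin{itemize}
\item Into $\rmQ(S)/S$, well-definedness forces $gT\subseteq S$; already $x=y=1$ forces $g\in S$.
\item Into $\rmQ(S)/(S+Sg)$, it is well defined but identically zero, since $g\cdot\bigl(S+\sum_f Sf\bigr)\subseteq Sg+S$.
\item In general, whenever $\Phi_g$ is well defined into some $\rmQ(S)/L$, one has $gT\subseteq L$. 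Hence $\Phi_g(1,\overline{\alpha})=\overline{g\alpha}=0$.
\end{itemize}

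\emph{Why Step~3 does not help.} The identities $g\alpha\otimes 1=g\otimes\alpha$ and $\alpha\otimes g=(\alpha g)\otimes 1$ are valid consequences of the hypothesis. However, they are not contradictory, and the functional you would apply to them is again symmetric. No multiplication-based pairing can separate $\alpha\otimes 1$ from $1\otimes\alpha$. You need a pairing that treats the two tensor factors differently, namely a projection onto the $S$-part.

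\emph{The missing idea.} Put $W=\sum_{f\in V}Sf$, so that $T=S+W$, and let $N=S\cap W$.
\begin{itemize}
\item For $g\in V$ you have $gS\subseteq W$ and $gW\subseteq S$, hence $gN\subseteq N$.
\item Since $T=S[V]$, $N$ is an ideal of $T$ contained in $S$. So $NT\subseteq S$, and $T/S$ is an $S/N$-module.
\item The map $\pi\colon T\to S/N$, $s+w\mapsto s+N$, is well defined, because two decompositions differ by an element of $S\cap W=N$. It is $S$-linear.
\item Therefore $(x,y)\mapsto \pi(y)\,(x+S)$ is an $S$-bilinear map $T\times T\to T/S$.
\end{itemize}
This map sends $\alpha\otimes 1-1\otimes\alpha$ to $(\alpha+S)-\pi(\alpha)(1+S)=\alpha+S$. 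So $\alpha\in S^*_T$ forces $\alpha\in S$. Equivalently, $T/N=S/N\oplus W/N$ is a $\mathbb{Z}/2$-graded $S/N$-algebra, and its tensor square splits accordingly.

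This argument needs neither $T\subseteq\rmQ(S)$ nor any localization. The paper itself gives no proof; it quotes this result from the cited sources.
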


We then have the following, which is the main result of this article.

%the author proved that the finite gengeratedness of $R^*$ is characterized by the equality $(\sqrt{(0)})^2=(0)$ in $\widehat{R}$, when $R$ is a Cohen-Macaulay semi-local ring with $\dim R_M=1$ for any $M\in \Max R$ (\cite[Theorem 4.2]{I2}). The following is a generalization of that characterization, which is the main result of this article. 

\begin{thm}\label{mthm}
Suppose that $(R, \m)$ is a Noetherian local ring with $\depth R>0$. If $\widehat{R}$ satisfies Serre's condition $(S_1)$ and $(\sqrt{(0)})^2=(0)$ in $\widehat{R}$, then $R^*$ is a finitely generated $R$-module. 
\end{thm}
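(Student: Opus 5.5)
The plan is to prove the theorem first for complete $R$, and then descend along $R\to\widehat{R}$ by faithful flatness.

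\emph{Complete case.} Assume $R=\widehat{R}$ and write $N=\sqrt{(0)}$. Proposition \ref{2.2} gives a module-finite intermediate ring $S$ with $\overline{R}=S+N\rmQ(R)$. Put $T=S+N\overline{R}$. This is a ring: $N\overline{R}$ is an ideal of $\overline{R}$, and $S\subseteq\overline{R}$.

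We claim $R^*\subseteq T$. The idea is to apply Proposition \ref{2.3} to the extension $T\subseteq\overline{R}$. Take $V=N\rmQ(R)\cap\overline{R}$, which contains $N\rmQ(R)$ because nilpotent elements are integral. Then $\overline{R}=T[V]$. Since $N^2=0$, we have $(N\rmQ(R))^2=N^2\rmQ(R)=0$, so $fg=0\in T$ for all $f,g\in V$. Proposition \ref{2.3} (with $\overline{R}$ inside $\rmQ(T)=\rmQ(R)$) then gives $T^*_{\overline{R}}=T$. Moreover, the map $\overline{R}\otimes_R\overline{R}\to\overline{R}\otimes_T\overline{R}$ shows that $R^*=R^*_{\overline{R}}\subseteq T^*_{\overline{R}}=T$.

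It remains to see that $R^*$ is finitely generated, which reduces to finite generation of $T$. Since $R$ is Noetherian, it suffices to show $N\overline{R}$ is a finitely generated $R$-module. Here is the approach. Let $c$ be a nonzerodivisor of $R$ in the conductor-type ideal with $c\,(N\rmQ(R))\subseteq N$. Such $c$ exists because $N$ is a finitely generated ideal and $N\rmQ(R)=\bigoplus_P NR_P$. Concretely, $N\rmQ(R)\cong N\otimes\rmQ(R)$ is annihilated by $N$, so it is a $\rmQ(B)$-module. Its elements integral over $R$ should be controlled by $N\overline{B}$, and $\overline{B}$ is module-finite by \cite[Theorem 9.2.2]{SH}. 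More precisely, $N$ is a finitely generated $B$-module that is torsion-free by $(S_1)$. Then $N\overline{R}=N\overline{B}$, which is the image of the finitely generated module $N\otimes_B\overline{B}$, hence finitely generated. This is the main obstacle: verifying that $N\overline{R}$, which equals $N\cdot S+N\cdot N\rmQ(R)=NS$, is finitely generated. In fact this is immediate, since $N\cdot N\rmQ(R)=0$. Hence $T=S+NS=S$ is module-finite, and so is $R^*\subseteq S$. So in fact $T=S$, and Proposition \ref{2.3} is applied with $T=S$ and $V=N\rmQ(R)$.

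\emph{Descent.} For general $R$, use that $\widehat{R}$ is faithfully flat over $R$. Since $\depth R>0$, nonzerodivisors of $R$ stay nonzerodivisors in $\widehat{R}$. Using the compatibility of strict closure with flat base change (it is the kernel of $\alpha\mapsto\alpha\otimes1-1\otimes\alpha$), I expect $R^*\otimes_R\widehat{R}\subseteq(\widehat{R})^*$, via $\overline{R}\otimes\widehat{R}\subseteq\overline{\widehat{R}}$. Then finiteness descends as follows. Write $R^*$ as the union of its finitely generated $R$-submodules $M_i$. The ascending chain $M_i\otimes\widehat{R}$ lies in the Noetherian module $(\widehat{R})^*$, so it stabilizes, and by faithful flatness the chain $M_i$ stabilizes too. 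The delicate points are the injectivity $\overline{R}\otimes_R\widehat{R}\to\rmQ(\widehat{R})$, which holds since $\rmQ(R)\otimes\widehat{R}$ is a localization of $\widehat{R}$ at nonzerodivisors, and the compatibility of strict closure with this base change. I expect this descent step, rather than the complete case, to need the most care.
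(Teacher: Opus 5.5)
Your proposal is correct and is essentially the paper's proof. In the complete case you take $S$ from Proposition~\ref{2.2}, note $T=S$, and apply Proposition~\ref{2.3} with $V=\sqrt{(0)}\rmQ(R)$ to get $R^*\subseteq S$; you then descend via $R^*\otimes_R\widehat{R}=[\widehat{R}]^*_{\overline{R}\otimes_R\widehat{R}}\subseteq[\widehat{R}]^*$ and faithful flatness, as the paper does, with your ascending-chain argument just spelling out the last step. The detour through $T=S+N\overline{R}$ is unnecessary, and the nonzerodivisor $c$ with $c\,N\rmQ(R)\subseteq N$ that you claim does not exist in general (e.g.\ for $R=k[[x,y]]/(y^2)$, where $N\rmQ(R)$ is not module-finite); your final argument never uses it.
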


\begin{proof}
Since $\overline{R}\otimes_R \widehat{R}$ is integral over $\widehat{R}$ in $\rmQ(\widehat{R})$, we have $\widehat{R}\subseteq \overline{R}\otimes_R \widehat{R} \subseteq \overline{\widehat{R}}$.
In contrast, let us consider the exact sequence 
$$0\to R^* \to \overline{R} \overset{\sigma}{\to} \overline{R}\otimes_R \overline{R},$$
where $\sigma(\alpha)=\alpha\otimes1-1\otimes \alpha$.
By taking the tensor product $-\otimes_R \widehat{R}$, we have the exact sequence 
$$0\to R^*\otimes_R \widehat{R} \to \overline{R}\otimes_R \widehat{R} \overset{\sigma}{\to} (\overline{R}\otimes_R \widehat{R})\otimes_{\widehat{R}}(\overline{R}\otimes_R \widehat{R}),$$ 
which implies that $R^*\otimes_R \widehat{R}=\left[\widehat{R}\right]^*_{\overline{R}\otimes_R \widehat{R}}$. Since $\overline{R}\otimes_R \widehat{R} \subseteq \overline{\widehat{R}}$, we obtain that
$$R^*\otimes_R \widehat{R}=\left[\widehat{R}\right]^*_{\overline{R}\otimes_R \widehat{R}}\subseteq \left[\widehat{R}\right]^*_{\overline{\widehat{R}}}=\left[\widehat{R}\right]^*.$$ 
Hence, if $\left[\widehat{R}\right]^*$ is a finitely generated $\widehat{R}$-module, then so is $R^*\otimes_R \widehat{R}$, which implies that $R^*$ is a finitely generated $R$-module, since $\widehat{R}$ is faithfully flat. Therefore, to prove Theorem \ref{mthm}, we may assume that $R=\widehat{R}$.

Suppose that $R=\widehat{R}$, $R$ satisfies $(S_1)$, and $(\sqrt{(0)})^2=(0)$ in $R$. Then, there exists an intermediate ring $S$ between $R$ and $\rmQ(R)$ such that $S$ is a finitely generated $R$-module and $\overline{R}=S+\sqrt{(0)}\rmQ(R)$ by Proposition \ref{2.2}. Set $V=\sqrt{(0)}\rmQ(R)$. Since
\begin{center} 
$\overline{R}=S+V=S[V]=\overline{S}$ and $fg\in V^2=(0) \subseteq S$
 \end{center}
 for every $f, g\in V$, we have $S^*=S$ by Proposition \ref{2.3}.
Therefore, we obtain that $R \subseteq R^*\subseteq S^*=S$, which implies that $R^*$ is a finitely generated $R$-module, as desired.  
\end{proof}

Here, we recall the following proposition.

\begin{prop}[{\cite[Proposition 4.1]{I2}}] \label{2.5}
Suppose that $R$ is a Noetherian ring and the Jacobson radical $\jcb(R)$ of $R$ contains a non-zerodivisor of $R$. If $R^*$ is a finitely generated $R$-module, then $(\sqrt{(0)})^2=(0)$ in $R$.
\end{prop}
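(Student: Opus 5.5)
The approach is to show that for nilpotent $x, y \in R$ the product $xy$ is divisible, inside $R^*$, by arbitrarily high powers of a fixed non-zerodivisor in $\jcb(R)$, and then to conclude $xy = 0$ from the Krull intersection theorem applied to the finitely generated module $R^*$.

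Fix a non-zerodivisor $a \in \jcb(R)$ and elements $x, y \in \sqrt{(0)}$. For every $n \ge 1$ the fractions $x/a^n$ and $y/a^n$ lie in $\rmQ(R)$ and are nilpotent: if $x^k = 0$ then $(x/a^n)^k = 0$. A nilpotent element satisfies a monic equation, so both fractions belong to $\overline{R}$.

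The key step is to show $xy/a^n \in R^*$ for all $n \ge 1$. We compute in $\overline{R}\otimes_R\overline{R}$, moving elements of $R$ across the tensor sign and using that $x/a^n$ and $y/a^n$ lie in $\overline{R}$:
\begin{align*}
\frac{xy}{a^n}\otimes 1 &= x\cdot\frac{y}{a^n}\otimes 1 = \frac{y}{a^n}\otimes x = \frac{y}{a^n}\otimes a^n\cdot\frac{x}{a^n} \\
&= y\otimes\frac{x}{a^n} = 1\otimes \frac{xy}{a^n}.
\end{align*}
Hence $xy/a^n \in R^*$, and therefore $xy = a^n\cdot(xy/a^n) \in a^nR^*$ for every $n$.

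Now assume $R^*$ is a finitely generated $R$-module. Since $a \in \jcb(R)$, the Krull intersection theorem for finitely generated modules over a Noetherian ring, with ideal contained in the Jacobson radical, gives
$$\bigcap_{n\ge 1} a^nR^* = (0).$$
Thus $xy = 0$ in $R^*\subseteq \rmQ(R)$. Since $R\to\rmQ(R)$ is injective, $xy=0$ in $R$. As $x, y \in \sqrt{(0)}$ were arbitrary, $(\sqrt{(0)})^2 = (0)$.

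The only delicate point is the tensor computation: each rewriting must move only elements of $R$ across the tensor sign, and the intermediate factors $x/a^n$ and $y/a^n$ must genuinely lie in $\overline{R}$. This is why the nilpotence of $x$ and $y$, which makes them integral, is essential. The final step is routine once $R^*$ is finitely generated.
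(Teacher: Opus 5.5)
Your proof is correct and complete. This paper gives no argument of its own for the statement: it quotes it from \cite[Proposition 4.1]{I2} and uses it only for the ``only if'' direction of the corollary on excellent rings, so there is no in-paper proof to compare against.

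The two parts of your argument hold up.
\begin{itemize}
\item In the tensor computation, each rewriting moves only $x$, $y$ or $a^n$, which are elements of $R$, across the tensor sign. The only other input is that $x/a^n$ and $y/a^n$ lie in $\overline{R}$, and nilpotence gives this. Hence $xy/a^n\in R^*$ for every $n\ge 1$.
\item The Krull intersection theorem, applied to the finitely generated module $R^*$ and the ideal $aR\subseteq\jcb(R)$, then forces $xy=0$.
\end{itemize}

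If you prefer not to cite Krull, there is an equivalent elementary finish. The ascending chain $\sum_{i=1}^{n}R\,(xy/a^i)$ stabilizes in the Noetherian module $R^*$. This gives $xy/a^{n+1}=\sum_{i=1}^{n}r_i\,xy/a^i$ for some $r_i\in R$. Multiplying by $a^{n+1}$ yields $(1-ac)\,xy=0$ with $c=\sum_{i=1}^{n} r_i a^{n-i}\in R$, and $1-ac$ is a unit because $a\in\jcb(R)$.
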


%\begin{proof}

%\end{proof}

%This proposition suggests that the equality $(\sqrt{(0)})^2=(0)$ is closely related to the finite generatedness of $R^*$.

Combining Theorem \ref{mthm} and Proposition \ref{2.5}, we obtain the following corollary, which holds for {\it excellent rings}; see, for example, \cite{M} for the definition of excellent rings.

\begin{cor}\label{2.6}
Suppose that $R$ is an excellent local ring and satisfies Serre's condition $(S_1)$. Then, $R^*$ is a finitely generated $R$-module if and only if $(\sqrt{(0)})^2=(0)$ in $R$.
\end{cor}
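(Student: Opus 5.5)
We prove the two implications separately, using Theorem \ref{mthm} for one direction and Proposition \ref{2.5} for the other. Throughout, $R$ is local with $\depth R>0$, as fixed at the start of this section.

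\emph{Finite generation implies $(\sqrt{(0)})^2=(0)$.} Since $\depth R>0$, the maximal ideal $\m=\jcb(R)$ contains a non-zerodivisor of $R$. Proposition \ref{2.5} then applies directly: if $R^*$ is a finitely generated $R$-module, then $(\sqrt{(0)})^2=(0)$ in $R$. Neither excellence nor $(S_1)$ is needed here.

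\emph{$(\sqrt{(0)})^2=(0)$ implies finite generation.} We reduce to Theorem \ref{mthm}, so we must check that $\widehat{R}$ satisfies $(S_1)$ and that $(\sqrt{(0)})^2=(0)$ in $\widehat{R}$. The key input from excellence is that the formal fibers of $R$ are geometrically regular, in particular regular and reduced.

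First, $(S_1)$ transfers to $\widehat{R}$. The map $R\to \widehat{R}$ is flat, and its fibers are Cohen--Macaulay, since they are regular. So for $Q\in\Ass \widehat{R}$ with $P=Q\cap R$, the standard description of associated primes under flat extensions gives $P\in \Ass R$ and $Q\in \Ass(\widehat{R}/P\widehat{R})$. By $(S_1)$, $P$ is minimal in $R$. Since the fiber $\widehat{R}\otimes_R k(P)$ satisfies $(S_1)$, $Q$ is minimal over $P\widehat{R}$. Going down for flat maps then shows that $Q$ is minimal in $\widehat{R}$.

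Second, the nilradical is preserved: $\sqrt{(0)}\widehat{R}=\sqrt{(0_{\widehat{R}})}$. Indeed, $\widehat{R}/\sqrt{(0)}\widehat{R}=\widehat{R/\sqrt{(0)}}$ is the completion of a reduced excellent local ring, and such a completion is reduced. This gives $\sqrt{(0_{\widehat{R}})}\subseteq\sqrt{(0)}\widehat{R}$, and the reverse inclusion is clear. Consequently
$$(\sqrt{(0_{\widehat{R}})})^2=(\sqrt{(0)})^2\widehat{R}=(0).$$
Theorem \ref{mthm} now yields that $R^*$ is a finitely generated $R$-module.

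The main obstacle is the ascent of $(S_1)$ to $\widehat{R}$. If the flat-extension argument above runs into trouble, the fallback is the known result that an excellent local ring satisfying $(S_k)$ has completion satisfying $(S_k)$ (Matsumura). Reducedness of the completion of a reduced excellent ring is standard and can simply be cited.
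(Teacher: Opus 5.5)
Your argument is correct and follows the paper's proof. Necessity comes from Proposition \ref{2.5}, since $\depth R>0$ puts a non-zerodivisor in $\m$. Sufficiency comes from Theorem \ref{mthm}, once you use that $\widehat{R}/\sqrt{(0)}\widehat{R}=\widehat{R/\sqrt{(0)}}$ is reduced, so that $\sqrt{(0)}\widehat{R}$ is the nilradical of $\widehat{R}$.

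Your explicit check that $(S_1)$ ascends to $\widehat{R}$ through the regular formal fibers supplies a hypothesis of Theorem \ref{mthm} that the paper's proof uses only tacitly. Its last step needs no going down: every prime $Q'\subseteq Q$ contracts to the minimal prime $P$, hence contains $P\widehat{R}$, hence equals $Q$.
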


\begin{proof}
Set $B=R/\sqrt{(0)}$. Since $B$ is an excellent reduced ring, $\widehat{B}=\widehat{R}/\sqrt{(0)}\widehat{R}$ is also reduced.
Hence, we have $\sqrt{(0)}\widehat{R}=\sqrt{\sqrt{(0)}\widehat{R}}=\sqrt{(0)\widehat{R}}$. Therefore, thanks to Theorem \ref{mthm} and Proposition \ref{2.5}, $R^*$ is a finitely generated $R$-module if and only if $(\sqrt{(0)})^2=(0)$ in $R$.
\end{proof}

\begin{cor}\label{2.6}
Suppose that $R$ is a complete Noetherian local ring and satisfies Serre's condition $(S_1)$. Then, $R^*$ is a finitely generated $R$-module if and only if $(\sqrt{(0)})^2=(0)$ in $R$.
\end{cor}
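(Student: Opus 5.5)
The proof will combine Theorem \ref{mthm} with Proposition \ref{2.5}, using that $R=\widehat{R}$. The standing assumption of this section, $\depth R>0$, stays in force throughout.

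For the forward direction, assume $R^*$ is a finitely generated $R$-module. Since $\depth R>0$, the maximal ideal $\m=\jcb(R)$ contains a non-zerodivisor of $R$. Proposition \ref{2.5} then gives $(\sqrt{(0)})^2=(0)$ in $R$ immediately.

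For the converse, assume $(\sqrt{(0)})^2=(0)$ in $R$. Because $R$ is complete, $\widehat{R}=R$. Hence the hypotheses of Theorem \ref{mthm}, namely that $\widehat{R}$ satisfies $(S_1)$ and $(\sqrt{(0)})^2=(0)$ in $\widehat{R}$, are exactly our assumptions on $R$. Theorem \ref{mthm} therefore yields that $R^*$ is finitely generated.

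Unlike Corollary \ref{2.6} for excellent rings, no step is needed to transfer the nilradical condition from $R$ to $\widehat{R}$, since the two rings coincide. The only point to check carefully is that $\depth R>0$ is available, which it is as a standing hypothesis. Alternatively, one could note that a complete local ring is excellent and cite the previous corollary directly; this is a second route to the same conclusion.
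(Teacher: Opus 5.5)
Your proof is correct and matches the paper's intent: the paper gives no separate proof of this corollary, which is the immediate combination of Theorem~\ref{mthm} (with $R=\widehat{R}$, so no transfer of the nilradical condition is needed) and Proposition~\ref{2.5} (using the standing hypothesis $\depth R>0$ to get a non-zerodivisor in $\m=\jcb(R)$). You are right to be careful about that standing hypothesis, since $(S_1)$ alone does not exclude Artinian rings such as $k[x]/(x^3)$, where $R^*=R$ is finitely generated but $(\sqrt{(0)})^2\neq(0)$.
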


\begin{rem}

In general, $R^*$ is not necessarily a finitely generated $R$-module even if the condition $(\sqrt{(0)})^2=(0)$ is satisfied in $R$.  
When $R$ is a Cohen-Macaulay local ring of dimension one, we can prove that $[ \widehat{R} ]^*=R^*\otimes_R \widehat{R}$ (\cite[Lemma 4.3]{I2}). Therefore, $R^*$ is  finitely generated if and only if so is $[ \widehat{R} ]^*$. 
The following example shows that there exists a Cohen-Macaulay local ring $R$ such that $(\sqrt{(0)})^2=(0)$ is satisfied in $R$ but not in $\widehat{R}$.

\end{rem}

\begin{ex}[{\cite[Remark 4.5]{I2}}]
Let $k[[X, Y]]$ be a $2$-dimensional formal power series ring over a field $k$ and let $A=k[[X, Y]]/(Y{^3})$. Then $(\sqrt{(0)})^2\neq (0)$ in $A$.  However, it is known that such a ring $A$ is the completion of a Noetherian local domain $R$ (see, for example, \cite[Theorem 1]{Le}). Therefore, since $A^*$ is not a finitely generated $A$-module by Corollary \ref{2.6}, $R^*$ is not a finitely generated $R$-module but $R$ is a domain.

\end{ex}

\end{document}